\theoremstyle{plain}
\newtheorem{theorem}{Theorem}[section]
\newtheorem{lemma}[theorem]{Lemma}
\newtheorem{proposition}[theorem]{Proposition}
\theoremstyle{definition}
\newtheorem{definition}[theorem]{Definition}
\theoremstyle{remark}
\newtheorem{remark}[theorem]{Remark}
\newtheorem{notations}[theorem]{Notations}
\newcommand{\fc}{\ensuremath{\mathcal{F}}}
\newcommand{\cc}{\ensuremath{\mathcal{C}}}
\newcommand{\vc}{\ensuremath{\mathcal{V}}}
\newcommand{\Q}{\mathbb{Q}}
\newcommand{\derRi}{R^h\pi_*\Q _X}
\newcommand{\Hom}{Hom_{D(Y)}(\derRi,R\pi_*\Q_X[ h ])}
\newcommand{\HomG}{Hom_{D(Y)}(R\pi_*\Q_X, \Q_Y)}
\newcommand{\Ps}{\mathbb{P}}
\newcommand{\bQ}{\mathbb{Q}}
\def\bin #1#2 {\left( \matrix { #1 \cr #2 \cr } \right) }
\begin{document}

\title[ On the  Gysin morphism for the Blow-up of an ordinary singularity]
{On the existence of a  Gysin morphism for the Blow-up of an ordinary singularity}

\author{Vincenzo Di Gennaro }
\address{Universit\`a di Roma \lq\lq Tor Vergata\rq\rq, Dipartimento di Matematica,
Via della Ricerca Scientifica, 00133 Roma, Italy.}
\email{digennar@axp.mat.uniroma2.it}

\author{Davide Franco }
\address{Universit\`a di Napoli
\lq\lq Federico II\rq\rq, Dipartimento di Matematica e
Applicazioni \lq\lq R. Caccioppoli\rq\rq, P.le Tecchio 80, 80125
Napoli, Italy.} \email{davide.franco@unina.it}

\abstract In this paper  we characterize  the Blowing-up maps of
ordinary singularities for which there exists a natural Gysin
morphism, i.e. a bivariant class $\theta \in
Hom_{D(Y)}(R\pi_*\mathbb Q_X, \mathbb Q_Y)$, compatible with
pullback and with restriction to the complement of the
singularity.

\bigskip\noindent {\it{Keywords}}: Bivariant theory, Gysin morphism, Blowing-up, Derived category,
Borel-Moore Homology,  Isolated
singularities, Projective contractions.

\medskip\noindent {\it{MSC2010}}\,: Primary 14B05; Secondary 14E15, 14F05, 14F45.

\endabstract
\maketitle

\begin{center}
{\it{To the memory of Sacha}}
\end{center}

\bigskip
\section{Introduction}

One of the problems that have most attracted Sacha Lascu's
interest throughout his mathematical career, is the projective
contractability of a smooth divisor inside a smooth projective
variety (see e.g. \cite{Sacha1}, \cite{Sacha2} and
\cite{FrancoLascu}). In this paper we are aimed at a topological
problem closely related to this.
\par \noindent
More specifically, a consequence of the main result of \cite{FrancoLascu} is that a smooth space  curve $C\subset  \Ps^3$
is contractable on a general surface of large degree $X$  containing $C$ iff $C$ is  $\bQ$-subcanonical. For such curves there exists a morphism
$\pi: X\longrightarrow Y$ contracting $C$ to the unique ordinary singular point of $Y$. The main result of this paper is that the morphism
$\pi: X\longrightarrow Y$ admits a Gysin map iff $C$ is rational, and this condition is equivalent to say that $Y$ is an \textit{homology manifold}, hence Poincar\'e Duality holds true on $Y$ (compare with \S 4).

Consider a projective variety $Y\subseteq \Ps ^N(\mathbb{C})$ with
an \textit{ordinary singularity} $\infty \in Y$, i.e. a
singularity whose projective tangent cone $G\subseteq \Ps ^{N-1}$
 is smooth and connected. Set
\begin{equation}
\label{Blowupintr} X:= Bl_{\infty}(Y)
\stackrel{\pi}{\longrightarrow} Y
\end{equation}
the Blow-up at $\infty$.  Of course, in this context we have a
surjection of Chow groups
$$\mathbb{A}_{\bullet}(X)\longrightarrow \mathbb{A}_{\bullet}(Y) \longrightarrow 0,$$
but  very seldom it happens that the push-forward  for rational
homology groups
$$\pi_*: H_{\bullet}(X; \mathbb Q)\longrightarrow H_{\bullet}(Y; \mathbb Q)$$
is surjective too. The surjectivity of push-forward $\pi_*$ is
closely related to the existence of some kind of Gysin map, i.e. a
\lq\lq wrong way\rq\rq
 morphism between rational homology groups
\begin{equation}
\label{Gysinhom}
 H_{\bullet}(Y; \mathbb Q)\longrightarrow H_{\bullet}(X; \mathbb Q)
\end{equation}
or, dually, between rational cohomology groups
\begin{equation}
\label{Gysincohom}
 H^{\bullet}(X; \mathbb Q)\longrightarrow H^{\bullet}(X; \mathbb Q).
\end{equation}
The existence of natural morphisms like (\ref{Gysinhom}) or
(\ref{Gysincohom}) had been extensively studied by Fulton and
MacPherson in \cite{FultonCF}, where it is introduced the concept
of \textit{bivariant theory}. These are \lq\lq simultaneous
generalizations of covariant group valued homology-like theories
and contravariant ring valued cohomology-like theories\rq\rq
(\cite[p. v]{FultonCF}). A bivariant theory from a category $\cc$
to abelian groups assigns to each morphism
$X\stackrel{f}{\rightarrow} Y$ in $\cc$ a (usually graded) group
$T(X\stackrel{f}{\rightarrow}Y)$. Such an assignment must satisfy
appropriate    axioms that ensure the existence of products,
pullbacks and pushforwards. When $\cc$ is the category of complex
algebraic varieties a \textit{Topological bivariant theory} can be
defined in such a way that \cite[\S 7]{FultonCF},  \cite{DeCM}
$$T^i(X\stackrel{f}{\rightarrow}Y):=Hom_{D(Y)}(Rf_*\mathbb Q_X , \mathbb Q_Y[i]),
$$
where  $X\stackrel{f}{\rightarrow} Y$ is a proper morphism of
algebraic varieties, $i\in\mathbb Z$, and $D(Y)$ is the
\textit{bounded derived category} of sheaves of $\mathbb Q$-vector
spaces on $Y$. An element
$$\theta \in Hom_{D(Y)}(Rf_*\mathbb Q_X , \mathbb Q_Y[i])$$ produces Gysin-like
morphisms
$$H_{\bullet}(Y; \mathbb Q)\longrightarrow H_{\bullet+i}(X; \mathbb Q),
\hskip3mm H^{\bullet}(X; \mathbb Q)\longrightarrow
H^{\bullet-i}(Y; \mathbb Q).$$ Such morphisms turn out to be
defined only for particular maps of algebraic varieties. In
particular, natural Gysin maps   are defined when the target $Y$
is smooth and when $X\stackrel{f}{\rightarrow} Y$ is either local
complete intersection or flat (\cite[\S 4.5]{Verdier}).
Unfortunately, our map (\ref{Blowupintr}) is neither local
complete intersection nor flat, and in general it does not admit a
Gysin morphism. Nevertheless, in particular cases (see \S 4) there
is a Gysin morphism satisfying some natural conditions such as
compatibility with pullback and restriction to the complement of
the singularity (compare with Definition \ref{naturality}).

In this paper we prove a characterization of desingularizations
like (\ref{Blowupintr}) admitting a natural Gysin morphism (see
Theorem \ref{main}). What it turns out is that there exists a
natural Gysin morphism if and only if we have a decomposition in
$D(Y)$:
\begin{equation}
\label{eqintro} R\,\pi_*\mathbb Q_X\simeq \bigoplus_{h\geq 0}
R^h\pi_*\mathbb Q_X[-h].
\end{equation}

Such a decomposition is very reminiscent of the  Decomposition
Theorem, as stated e.g. in \cite[Remark 1.6.2, (3)]{DeCM2}, and of
the Laray-Hirsch Theorem (compare with the proof of Lemma 2.5 in
\cite{DGFCCM2}). Indeed, in view of Proposition \ref{nfake} (5),
formula (\ref{eqintro}) could be proved as a consequence of the
Decomposition Theorem. Nevertheless, in order to prove
(\ref{eqintro}), we prefer to follow the (somewhat more direct)
approach of \S 3.

One may ask whether our result holds true with $\mathbb
Z$-coefficients. We have in mind to return on this question in a
future paper.

\bigskip

\section{Notations}

\vskip5mm

\begin{notations}
\begin{enumerate}
\item  For any algebraic variety $X$ we will denote by $\mathbb Q_X$ the
\textit{constant sheaf} on $X$, by $\vc_X$ the category of sheaves
of $\mathbb Q_X$-modules, and by $D(X)$ the \textit{bounded
derived category} of $\vc_X$.
\item
Consider a projective variety $Y\subseteq  \Ps ^N$ with an
\textit{ordinary singularity} $\infty \in Y$, i.e. a singularity
whose projective tangent cone $G\subseteq \Ps ^{N-1}$ is smooth.
Set
$$X:= Bl_{\infty}(Y) \stackrel{\pi}{\longrightarrow} Y$$
 the Blow-up at $\infty$.
Of course we have an inclusion $X \subseteq Bl_{\infty}(\Ps ^N)$
and $G\stackrel{i}{\hookrightarrow}X$ coincides with the
exceptional divisor. Furthermore, we set
$$
\begin{array}{ccccc}
U &\stackrel{j}{\rightarrow}  &  X \\
\stackrel{\text{id}}{}\updownarrow\,\,\,&  &\,\,\stackrel{}\downarrow{\pi}\\
U& \stackrel{k}{\rightarrow} &Y.\\
\end{array}
$$
where $U:=X-G=Y- \{\infty\}$.
\item Since the morphism $\pi: X\longrightarrow Y$ is proper,
for any sheaf $\fc \in \vc_X$ the \textit{direct image with proper support} $\pi_!\fc\in \vc_Y$
\cite[\S 2.6]{Iversen},  \cite[Definition 2.3.21]{Dimca2}
 coincides with the ordinary direct image $\pi_*\,\fc\in \vc_Y$.
\end{enumerate}
\end{notations}

\begin{remark}
\label{propersupport} By definition of direct image with proper
support (\cite[\S 2.6]{Iversen},  \cite[Definition
2.3.21]{Dimca2}), the sheaf $k_!\mathbb Q_U$ ($j_!\mathbb Q_U$
resp.) can be identified with the subsheaf of $\mathbb Q_Y$ (of
$\mathbb Q_X$ resp.) consisting of sections with support contained
in $U$.
\end{remark}

\vskip3mm
\begin{definition}
\label{naturality} We will say that a graded morphism
$$ \theta: H^{\bullet }(X; \mathbb Q)\longrightarrow H^{\bullet }(Y; \mathbb Q)$$
is  \textit{natural} if  the following conditions are satisfied:
\begin{enumerate}
\item the composite of $\theta $ with the pullback
$$ \theta \circ \pi ^*:H^{\bullet }(Y; \mathbb Q)\longrightarrow H^{\bullet }(Y; \mathbb Q)
$$
is the identity map;
\item $\theta$ is compatible with restrictions on $U$:
$$j^*= k^*\circ \theta: H^{\bullet }(X; \mathbb Q)\longrightarrow H^{\bullet }(U; \mathbb Q).$$
\end{enumerate}
\end{definition}

\vskip3mm
\begin{definition}
\label{Gysin} Consider a \textit{(topological) bivariant class}
\cite[\S 7]{FultonCF}, \cite{DeCM}
$$\theta \in Hom_{D(Y)}(R\pi_*\mathbb Q_X, \mathbb Q_Y).$$ By abuse of notations, we also denote by
$\theta $ the map induced by such a class on the cohomology groups \cite{FultonCF}, \cite{DeCM}:
$$ \theta: H^{\bullet }(X; \mathbb Q)\longrightarrow H^{\bullet }(Y; \mathbb Q).$$
According to Definition \ref{naturality}, we will say that
$\theta$ defines a \textit{natural Gysin map} if the last morphism
is natural.
\end{definition}

\begin{lemma}
\label{relative} Keep notations as above. Then  pullbacks give
isomorphisms
\begin{equation}
\label{isomorphism} H^h(Y;
\mathbb Q) \simeq H^h(Y, \{\infty\}; \mathbb Q)\simeq  H^h(X,G; \mathbb Q), \hskip3mm \forall h\geq 1.
\end{equation}
Assume additionally that there exists a natural morphism:
$$\theta: H^{\bullet }(X; \mathbb Q)\longrightarrow H^{\bullet }(Y; \mathbb Q).$$ Then the map
\begin{equation}
\label{directsum}
  (\theta, i^*): H^{\bullet}(X;\mathbb Q)\longrightarrow H^{\bullet }(Y;\mathbb Q)\oplus H^{\bullet }(G;\mathbb Q)
\end{equation}
is an  isomorphism of graded groups in degree $\geq 1$.
\end{lemma}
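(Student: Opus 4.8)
The plan is to prove the two assertions separately, deriving the isomorphism (\ref{directsum}) from a short exact sequence obtained by combining (\ref{isomorphism}) with the retraction property of $\theta$. For the first isomorphism in (\ref{isomorphism}) I would invoke the long exact sequence of the pair $(Y,\{\infty\})$: since $\infty$ is a point, $H^h(\{\infty\};\mathbb Q)=0$ for $h\geq 1$, and since $Y$ is connected the restriction $H^0(Y)\to H^0(\{\infty\})$ is an isomorphism; hence the canonical map $\rho_Y\colon H^h(Y,\{\infty\};\mathbb Q)\to H^h(Y;\mathbb Q)$ is an isomorphism for every $h\geq 1$. For the second isomorphism the natural tool is the sheaf-theoretic description of relative cohomology. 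Using the exact sequence $0\to j_!\mathbb Q_U\to\mathbb Q_X\to i_*\mathbb Q_G\to 0$ on $X$ and its analogue on $Y$ (with the skyscraper sheaf at $\infty$ in place of $i_*\mathbb Q_G$), together with Remark \ref{propersupport}, one identifies $H^h(X,G;\mathbb Q)\cong\mathbb H^h(X;j_!\mathbb Q_U)$ and $H^h(Y,\{\infty\};\mathbb Q)\cong\mathbb H^h(Y;k_!\mathbb Q_U)$. I would then use that $\pi$ is proper and restricts to the identity on $U$, so that $R\pi_*=R\pi_!$ and $\pi\circ j=k$, whence
$$R\pi_*(j_!\mathbb Q_U)=R\pi_!(j_!\mathbb Q_U)=R(\pi\circ j)_!\,\mathbb Q_U=k_!\mathbb Q_U.$$
Passing to hypercohomology gives $\mathbb H^h(X;j_!\mathbb Q_U)\cong\mathbb H^h(Y;k_!\mathbb Q_U)$, which is the second isomorphism in (\ref{isomorphism}), realised by the pullback $\pi^*$ of the pair $(X,G)\to(Y,\{\infty\})$.

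For the second assertion the key step is to collapse the long exact sequence of the pair $(X,G)$ into a short exact sequence. By naturality of the long exact sequences, $\pi^*$ fits into a commutative square relating the forget-supports maps, so under the identifications of (\ref{isomorphism}) the map $r\colon H^h(X,G)\to H^h(X)$ becomes exactly $\pi^*\colon H^h(Y)\to H^h(X)$ for $h\geq1$. Condition (1) of Definition \ref{naturality}, namely $\theta\circ\pi^*=\mathrm{id}$, shows that $\pi^*$ is injective; therefore all connecting maps $H^{h-1}(G)\to H^h(X,G)\cong H^h(Y)$ vanish for $h\geq1$ (here connectedness of $X$ and $G$, forcing $H^0(G)=\mathbb Q$ and $i^*$ to be an isomorphism in degree $0$, disposes of the bottom of the sequence). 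Equivalently, $i^*\colon H^m(X)\to H^m(G)$ is surjective for every $m\geq1$. Consequently, for $h\geq1$ the long exact sequence reduces to
$$0\longrightarrow H^h(Y;\mathbb Q)\xrightarrow{\ \pi^*\ }H^h(X;\mathbb Q)\xrightarrow{\ i^*\ }H^h(G;\mathbb Q)\longrightarrow 0.$$

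Finally I would exploit that $\theta$ splits this sequence. From $\theta\circ\pi^*=\mathrm{id}$ the endomorphism $\pi^*\theta$ is idempotent, with image $\pi^*H^\bullet(Y;\mathbb Q)$ and kernel $\ker\theta$, yielding a decomposition $H^h(X)=\pi^*H^h(Y)\oplus\ker\theta$. Since the short exact sequence identifies $\pi^*H^h(Y)$ with $\ker i^*$, the restriction $i^*|_{\ker\theta}\colon\ker\theta\to H^h(G)$ is an isomorphism (a surjection restricted to a complement of its kernel). Writing a class of $H^h(X)$ as $\pi^*a+w$ with $a\in H^h(Y)$ and $w\in\ker\theta$, the map $(\theta,i^*)$ sends it to $(a,\,i^*\pi^*a+i^*w)$; this is block-triangular with the identity and $i^*|_{\ker\theta}$ on the diagonal, hence an isomorphism for every $h\geq1$.

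I expect the main obstacle to be the second isomorphism in (\ref{isomorphism}) and, more precisely, the verification that under it the map $r$ corresponds exactly to $\pi^*$: everything in the proof of (\ref{directsum}) is driven by this compatibility together with condition (1), after which the collapse of the long exact sequence and the splitting are purely formal. It is worth remarking that condition (2) of Definition \ref{naturality} appears not to be needed for this lemma; only the retraction property (1) and the geometry of the ordinary singularity (smoothness and connectedness of $G$) enter.
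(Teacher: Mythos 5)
Your proposal is correct and follows essentially the same route as the paper: the long exact sequence of the pair $(Y,\{\infty\})$ for the first isomorphism, the identification $R\pi_*(j_!\mathbb Q_U)\simeq k_!\mathbb Q_U$ for the second (this is exactly the paper's ``sheaf theoretic proof''; the paper also offers an alternative topological argument via tubular neighborhoods and tautness), and the collapse of the long exact sequence of $(X,G)$ into short exact sequences using the injectivity of $\pi^*$ supplied by condition (1) of Definition \ref{naturality}. Your explicit splitting $H^h(X)=\pi^*H^h(Y)\oplus\ker\theta$ and the block-triangular check of $(\theta,i^*)$ simply spell out what the paper compresses into ``and we are done,'' and your observation that condition (2) is never used is consistent with the paper's proof, which likewise invokes only condition (1).
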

\begin{proof}
The isomorphism $H^h(Y, \{\infty\}; \Q)\simeq H^h(Y; \Q)$ follows
from the long exact sequence:
$$\dots \longrightarrow H^h(Y, \{\infty\}; \Q)\longrightarrow  H^h(Y; \Q) \longrightarrow
H^h( \{\infty\}; \Q)\longrightarrow \dots$$ As for the isomorphism
$$ \pi^*: H^h(Y, \{\infty\}; \Q)\simeq H^h(X,G; \Q)$$
we are going to give two different proofs.

\textit{Topological proof:} consider a small open neighborhood
$\infty\in B$ and set $T:=\pi ^{-1}(B)$. Of course $T$ is a
tubular neighborhood of $G$ in $X$ and we have $\partial B\simeq
\partial T$. Moreover, $\{\infty\}$ and $G$ are tautly imbedded in
$Y$ and $X$ \cite[p. 289]{Spanier}. Hence we have
$$H^h(Y, \{\infty\}; \Q)\simeq H^h(Y, B; \Q)\simeq H^h(Y- B, \partial B; \Q)
$$
$$\simeq H^h(X- T, \partial T; \Q)\simeq H^h(X, T; \Q)\simeq H^h(X,G; \Q).$$

\textit{Sheaf theoretic  proof:} by \cite[Theorem 12.1]{Br},
\cite[Remark 2.4.5, (ii)]{Dimca2}, and Remark \ref{propersupport},
we have
$$H^h(Y, \{\infty\}; \Q)\simeq H^h(Y,k_!\Q _U)= H^h(Y,\pi_!(j_!\Q _U))$$
$$\simeq H^h(Y,\pi_*(j_!\Q _U))\simeq H^h(X,j_!\Q _U)\simeq H^h(X, G; \Q). $$
In order to prove (\ref{directsum}), look at the following long
exact sequence:
$$\dots \longrightarrow H^h(Y; \Q)\simeq H^h(X,G; \Q)\longrightarrow  H^h(X; \Q)
\longrightarrow H^h( G; \Q)\longrightarrow \dots$$ By Definition
\ref{naturality} (1), the map $\pi^*:H^h(Y; \Q)\longrightarrow
H^h(X; \Q) $ is injective $\forall h\geq 1$, so we have:
$$0 \longrightarrow H^h(Y; \Q)\longrightarrow  H^h(X; \Q) \longrightarrow H^h( G; \Q)\longrightarrow 0,$$
and we are done.
\end{proof}

\vskip5mm

\section{The main result}

\vskip5mm

\begin{notations}
\label{injres}
\begin{enumerate}
\item Combining \cite[I, Theorem 6.2]{Iversen} with \cite[\S 7.3.2]{Voisin}, we see that
the natural morphism $\Q_Y \longrightarrow \pi_* \Q_X$ in $\vc_Y$  is induced by an element
$$\iota_0\in Hom_{D(Y)}(\Q_Y, R\pi_* \Q_X).$$
\item We denote by $K^{\bullet }$ an injective resolution of $\Q_Y$.
\item
We denote by $I^{\bullet }$ an injective resolution of $\Q_X$. By
\cite[II, Corollary 4.13]{Iversen},
$J^{\bullet}:=\pi_*I^{\bullet}$ can be identified as the derived
direct image $R\pi_*\Q_X $ in $D(Y)$. So, when $h\geq 1$, $\derRi
= H^h(J^{\bullet})$ is the skyscraper sheaf supported on $\infty$,
with stalk at $\infty$ given by $H^h( G; \Q)$. Furthermore, the
morphism $\iota_0$ defined in (1) can be seen as an element in
$[K^{\bullet}, J^{\bullet}]$.
\end{enumerate}
\end{notations}

We prove the following result which will be needed throughout
the paper (compare with \cite[Proposition 1.2]{Del68}).

\begin{proposition}
\label{Deligne} Assume that, for any $h\geq 1$, the identity map
$\derRi \longrightarrow \derRi$ lifts to a morphism $\iota_h \in
\Hom$. Then  we have an isomorphism in $D(Y)$:
$$\bigoplus _{h\geq 0}\iota_h  : \Q_Y + \sum_{h\geq 1}  \derRi[-h ] \longleftrightarrow R\pi_*\Q_X.$$
\end{proposition}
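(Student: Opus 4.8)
The plan is to avoid constructing a quasi-inverse and instead verify directly that the proposed map is an isomorphism on cohomology sheaves. Concretely, I would set
$$\phi:=\iota_0\oplus\bigoplus_{h\geq 1}\iota_h[-h]:\ \Q_Y\oplus\bigoplus_{h\geq 1}\derRi[-h]\longrightarrow R\pi_*\Q_X,$$
where each $\iota_h[-h]$ is the shift of the given $\iota_h\in\Hom$. Since a morphism in the bounded derived category $D(Y)$ is an isomorphism if and only if it induces isomorphisms between all cohomology sheaves $H^m(-)$, the whole matter reduces to computing $H^m(\phi)$ for every $m$.

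First I would identify the cohomology sheaves of source and target. The sum defining the source is \emph{finite}: indeed $\derRi=0$ for $h$ large, and for $h\geq 1$ each $\derRi$ is the skyscraper sheaf at $\infty$ recalled in Notations \ref{injres}, so $H^m$ commutes with the direct sum. As every summand $\derRi[-h]$ is a sheaf placed in the single degree $h$, one gets $H^m\!\left(\Q_Y\oplus\bigoplus_{h\geq 1}\derRi[-h]\right)=R^m\pi_*\Q_X$, which matches $H^m(R\pi_*\Q_X)=R^m\pi_*\Q_X$ on the target.

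It then remains to check that $H^m(\phi)$ is an isomorphism for each $m$. On cohomology sheaves only the summand with $h=m$ can contribute, and under the canonical shift identification $H^m(C[-m])=H^0(C)$ one has $H^m(\iota_m[-m])=H^0(\iota_m)$. For $m\geq 1$, the hypothesis that $\iota_m$ \emph{lifts the identity} says precisely that $H^0(\iota_m)=\mathrm{id}_{R^m\pi_*\Q_X}$; for $m=0$ the map $H^0(\iota_0)$ is the natural morphism $\Q_Y\to\pi_*\Q_X=R^0\pi_*\Q_X$ of Notations \ref{injres} (1), which is an isomorphism because $\pi$ has connected fibres. Hence $H^m(\phi)$ is an isomorphism for all $m$, and $\phi$ is an isomorphism in $D(Y)$.

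The genuine difficulty of the statement is already absorbed into the hypothesis, namely the \emph{existence} of the lifts $\iota_h$, so the only points requiring care in the argument above are formal. The first is to make the interpretation of \lq\lq lifts the identity\rq\rq\ explicit as the equality $H^0(\iota_h)=\mathrm{id}$ under the identification $H^0(R\pi_*\Q_X[h])=\derRi$. The second is to justify that the (finite) direct sum in $D(Y)$ commutes with the cohomology-sheaf functors $H^m$, which is exactly what allows one to decouple the contributions of the various $\iota_h$ degree by degree.
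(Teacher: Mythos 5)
Your proposal is correct and follows essentially the same route as the paper: both arguments verify that the map is a quasi-isomorphism by checking cohomology sheaves degree by degree, using connectedness of $G$ (equivalently, of the fibres) to handle $H^0$ via $\pi_*\Q_X=\Q_Y$, and reading the lifting hypothesis as $H^0(\iota_h)=\mathrm{id}$ for $h\geq 1$. The only difference is presentational: the paper first replaces $\Q_Y$ by an injective resolution $K^{\bullet}$ and notes that the skyscraper summands $\derRi[-h]$ are already injective, so the map becomes a genuine map of complexes, whereas you work directly in $D(Y)$ with the cohomology-sheaf functors --- which is equally valid.
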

\begin{proof}
The above morphism is well defined by \ref{injres}, (1). Since $G$
is connected we have $\pi_*\mathbb Q_X=\mathbb Q_Y$, i.e.
$H^0(J^{\bullet})=H^0(K^{\bullet})$.
Furthermore, the complex $\derRi [h]$ is injective $\forall
h\geq1$,  because  $\derRi$ is a skyscraper sheaf. So  we have
$$\Q_Y + \sum_{h\geq 1}\derRi[ -h ]=K^{\bullet }+ \sum_{h\geq 1}\derRi[ -h ] $$
in $D(Y)$. We are done because  our hypothesis implies that
$$H^h\left(K^{\bullet }+\sum_{h\geq 1}\derRi[ -h ]\right)=H^h(J^{\bullet }) \hskip3mm \forall
h,
$$
therefore the map $\bigoplus _{h\geq 0}\iota_h$ is a quasi-isomorphism.
\end{proof}

\begin{remark}
By   \cite[Proposition 1.2]{Del68}, the hypothesis of Proposition
\ref{Deligne} is equivalent to the fact that, for any
cohomological functor $T$ from $\vc_Y$ to an abelian category, the
spectral sequence
$$ E^{pq}_2= T(R^q\pi_*\Q_X [p]) \Rightarrow RT(\pi_*\Q_X[p+q])$$
degenerates at $E^{pq}_2$.
 \end{remark}

\begin{proposition}
\label{mainprop} Assume that there exists a natural morphism $$
H^{\bullet }(X; \Q)\longrightarrow H^{\bullet }(Y; \Q).$$ Then the
hypothesis of Proposition \ref{Deligne} is satisfied, i.e. the
identity map $\derRi \rightarrow \derRi$ lifts to a morphism
$\iota_h \in \Hom$ for any $h\geq 1$.
\end{proposition}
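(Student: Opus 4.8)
The plan is to prove that each required lift $\iota_h$ exists by reducing its existence to a single surjectivity statement and then reading off that surjectivity from the naturality of the given morphism $\theta$. Write $\lambda\colon\{\infty\}\hookrightarrow Y$ for the inclusion of the singular point, so that for $h\geq 1$ we have $\derRi=\lambda_*H^h(G;\Q)$ by Notations \ref{injres}. Since $\lambda$ is a closed immersion, $\lambda_*=\lambda_!$, and adjunction gives
$$\Hom\simeq Hom_{D(\{\infty\})}\!\big(H^h(G;\Q),\lambda^!R\pi_*\Q_X[h]\big)\simeq Hom_{\Q}\!\big(H^h(G;\Q),H^h(X,U;\Q)\big),$$
where the last identification uses $\lambda^!R\pi_*\Q_X\simeq R\Gamma_G(X,\Q_X)$ (because $R\Gamma_{\{\infty\}}\circ R\pi_*\simeq R\Gamma_G$, as $\pi$ is proper with $\pi^{-1}(\infty)=G$) together with $H^h(R\Gamma_G(X,\Q_X))=H^h(X,U;\Q)$, $U=X-G$. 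First I would check that, under this dictionary, an element $\iota_h$ lifts the identity of $\derRi$ precisely when the corresponding $\psi_h\in Hom_{\Q}(H^h(G;\Q),H^h(X,U;\Q))$ is a right inverse of the forget-supports map $r\colon H^h(X,U;\Q)\to H^h(G;\Q)$: indeed $\iota_h$ factors as the counit $\lambda_*\lambda^!R\pi_*\Q_X[h]\to R\pi_*\Q_X[h]$ precomposed with $\lambda_*\psi_h$, and taking $H^0$ and stalks at $\infty$ turns the counit into $r$, so that $H^0(\iota_h)=\lambda_*(r\circ\psi_h)$. Hence the whole proposition reduces to proving that $r$ is surjective for every $h\geq 1$.

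Next I would identify $r$ with the composite $H^h(X,U;\Q)\xrightarrow{a}H^h(X;\Q)\xrightarrow{i^*}H^h(G;\Q)$, where $a$ forgets supports; this is just functoriality of restriction to $G$. The long exact sequence of the pair $(X,U)$ gives $\mathrm{im}(a)=\ker\big(j^*\colon H^h(X;\Q)\to H^h(U;\Q)\big)$, so $\mathrm{im}(r)=i^*(\ker j^*)$. Now I bring in $\theta$. By naturality condition (2) of Definition \ref{naturality}, $j^*=k^*\circ\theta$, hence $\ker\theta\subseteq\ker j^*$. On the other hand, Lemma \ref{relative} says that $(\theta,i^*)$ is an isomorphism in degree $\geq 1$, so $i^*$ restricts to an isomorphism $\ker\theta\xrightarrow{\ \sim\ }H^h(G;\Q)$. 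Combining these, $\mathrm{im}(r)=i^*(\ker j^*)\supseteq i^*(\ker\theta)=H^h(G;\Q)$, so $r$ is surjective. Choosing any $\Q$-linear section $\psi_h$ of $r$ and running the dictionary backwards produces the desired $\iota_h\in\Hom$ lifting the identity, for every $h\geq 1$, which is exactly the hypothesis of Proposition \ref{Deligne}.

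The main obstacle is the first step: correctly translating the derived-category lifting condition into the surjectivity of the single forget-supports map $r$, and in particular verifying that the composite $H^0(\iota_h)$ corresponds precisely to post-composition with $r$ under the adjunction (the counit computation). Once this is in place, the input from $\theta$ is only the short diagram chase $\ker\theta\subseteq\ker j^*$ together with $i^*(\ker\theta)=H^h(G;\Q)$, which uses exactly the two defining properties of a natural morphism and Lemma \ref{relative}. I expect no real difficulty in this second step; the care is entirely in setting up the adjunction and the stalkwise identification of the counit in the first.
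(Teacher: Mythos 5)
Your proof is correct, and it takes a genuinely different route from the paper's. The paper works concretely with injective resolutions: setting $J^{\bullet}=\pi_*I^{\bullet}$, it lifts a basis of $H^h(G;\Q)$ to cocycles in $\Gamma(J^h)$ \emph{supported at} $\infty$ --- using naturality (2) and the identifications $H^h(Y,k_!\Q_U)\simeq H^h(X,G;\Q)\simeq H^h(X,j_!\Q_U)$ from Lemma \ref{relative} to correct an arbitrary lift by a coboundary plus a section supported in $U$, with flabbiness of $J^{h-1}$ used to extend $\delta_U$ --- and the resulting monomorphism of sheaves $R^h\pi_*\Q_X\hookrightarrow \ker(J^h\to J^{h+1})$ \emph{is} the desired $\iota_h$. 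You instead make the construction formal: the $(\lambda_!,\lambda^!)$ adjunction at the closed point, the identity $\Gamma_{\{\infty\}}\circ\pi_*=\pi_*\circ\Gamma_{\pi^{-1}(\infty)}$ (which, note, holds for \emph{any} continuous map --- properness is not actually needed, a harmless overstatement on your part), and the semisimplicity of $D(\mathrm{pt})$ over a field convert the lifting problem into surjectivity of the single forget-supports map $r\colon H^h(X,U;\Q)\to H^h(G;\Q)$; your counit computation identifying $H^0(\iota_h)$ with $r\circ\psi_h$ is correct, since the counit on degree-$h$ stalks at $\infty$ is exactly $H^h_{\{\infty\}}(Y,R\pi_*\Q_X)\to (R^h\pi_*\Q_X)_{\infty}$, i.e.\ forget supports followed by the retraction identification of the stalk with $H^h(G;\Q)$, which is $i^*\circ a$. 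From there you feed in exactly the same two inputs the paper uses --- condition (2) of Definition \ref{naturality} in the form $\ker\theta\subseteq\ker j^*$, and the isomorphism $(\theta,i^*)$ of Lemma \ref{relative} giving $i^*(\ker\theta)=H^h(G;\Q)$ --- so the cohomological content is identical: every class on $G$ lifts to a class on $X$ killed on $U$ (you phrase this with the pair $(X,U)$, i.e.\ supports in $G$, where the paper works with the dual pair $(X,G)$ via $j_!\Q_U$). What your packaging buys is a resolution-free, coordinate-free argument in which the only choice is a $\Q$-linear section of $r$; what it costs is the full six-functor/Verdier formalism, whereas the paper's hands-on argument needs only flabbiness of injective sheaves and elementary support manipulations. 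Both arguments use $\Q$-coefficients essentially (your splitting in $D(\mathrm{pt})$ and section of $r$; the paper's choice of a basis of $H^h(G;\Q)$), so neither generalizes to $\bZ$ as written.
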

\begin{proof}
Keep notations as in \ref{injres}. Set $\Gamma^{\bullet}:=
\Gamma(J^{\bullet})$ and denote by $d^h: \Gamma^h\to \Gamma^{h+1}$
the differential. Then we have $H^h(X; \Q) =
H^h(\Gamma^{\bullet})$, and by hypothesis any element of $H^h(G;
\Q)$ can be lifted to an element  $ \gamma \in Ker\,d^h$. We claim
that any  $\alpha\in  H^h(G, \Q)$ can be lifted to an element
$\beta\in Ker\,d^h\subset \Gamma (J^h)=\Gamma (I^h)$ which is
supported on $\infty$. Of course, to prove our claim amounts to
show that any  $\alpha\in  H^h(G;\Q)$ can be lifted to an element
$\beta\in Ker\,d^h\subset \Gamma (J^h)=\Gamma (I^h)$ such that
$\beta\mid_U=0\in \Gamma (J^h\mid_U)$. But $\gamma\mid_U$ projects
to a cohomology class living in $Im(H^h(X; \Q)\to H^h(U; \Q))$. By
(2) of Definition \ref{naturality}, we have
$$Im(H^h(X; \Q)\to H^h(U; \Q))\subseteq Im(H^h(Y; \Q)\to H^h(U; \Q)).
$$
By Lemma \ref{relative}, we find
$$Im(H^h(Y; \Q)\to H^h(U; \Q))= Im(H^h(X, G; \Q)\to H^h(U; \Q)).
$$
Since
$$H^h(Y,k_!\Q _U)\simeq H^h(Y, \infty; \Q)\simeq H^h(X, G; \Q)\simeq H^h(X,j_!\Q _U)$$
 (\cite[Theorem 12.1]{Br}, \cite[Remark 2.4.5, (ii)]{Dimca2}), Remark \ref{propersupport} implies that
 there exists $\delta_U\in \Gamma(J^{h-1}\mid_U)$ and
$\sigma \in \Gamma(J^h)$ supported in $U$
such that
$$\gamma\mid_U-d^{h-1}(\delta_U)=\sigma\mid_U.$$
Finally,  there exists $\delta\in \Gamma(J^{h-1})$ with $\delta\mid_U=\delta_U$, because  $J^{h-1}$ is injective
(hence flabby). We conclude that the section
$$\gamma- \sigma -d^{h-1}(\delta)\in \Gamma(J^h)
$$
is supported on $\infty$.
Our claim is proved because $\sigma +d^{h-1}(\delta)\in \Gamma(J^h)$ vanishes in $H^h(G; \Q)$.

To conclude the proof, fix a basis $\alpha_r\in H^h(G; \Q)$ and
lift any $\alpha_r$ to a $\beta_r\in Ker\, d^h\subseteq \Gamma
(J^h)=\Gamma (I^h)$ as in the claim. We get an isomorphism between
$H^h(G; \Q)$ and a subspace of $\Gamma (I^h)$ consisting of
sections supported on $\infty$. We are done because such an
isomorphism projects to a monomorphism of sheaves:
$$R^h\pi_*\Q_X \hookrightarrow Ker\,(J^h\to J^{h+1}).$$
\end{proof}

\bigskip

\begin{theorem}
\label{main} Keep notations as above. Then the following
properties are equivalent:
\begin{enumerate}
\item there exists a natural Gysin map $\theta \in \HomG$;
\item there exists a natural morphism of graded groups
$$  H^{\bullet }(X; \Q)\longrightarrow H^{\bullet }(Y; \Q);$$
\item we have an isomorphism in $D(Y)$:
$$\bigoplus _{h\geq 0}\iota_h  : \Q_Y + \sum_{h\geq 1}  \derRi[-h ] \longleftrightarrow R\,\pi_*\Q_X.$$
\end{enumerate}
\end{theorem}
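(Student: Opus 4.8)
The plan is to prove the three equivalences cyclically, $(1)\Rightarrow(2)\Rightarrow(3)\Rightarrow(1)$, since most of the hard analytic work has already been isolated into the preceding propositions.

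First, the implication $(1)\Rightarrow(2)$ is essentially definitional. A natural Gysin class $\theta \in \HomG$ induces, by Definition \ref{Gysin}, a graded morphism on cohomology $\theta : H^{\bullet}(X;\Q)\to H^{\bullet}(Y;\Q)$, and the naturality of the class (compatibility with pullback and restriction to $U$) is precisely what is encoded in Definition \ref{naturality}. So the induced cohomology morphism is a natural morphism of graded groups, giving $(2)$.

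Next, $(2)\Rightarrow(3)$ is where the main content lies, but it has been prepared for us. Given a natural morphism $H^{\bullet}(X;\Q)\to H^{\bullet}(Y;\Q)$, Proposition \ref{mainprop} tells us that the hypothesis of Proposition \ref{Deligne} is satisfied, namely that for each $h\geq 1$ the identity $\derRi \to \derRi$ lifts to some $\iota_h \in \Hom$. Feeding this into Proposition \ref{Deligne} then yields exactly the decomposition in $D(Y)$ asserted in $(3)$, with the isomorphism $\bigoplus_{h\geq 0}\iota_h$. I expect the only thing to check carefully here is that the $\iota_h$ produced by Proposition \ref{mainprop} are the same data required by Proposition \ref{Deligne}, which they are by construction, so this step is a clean invocation of the two propositions in series.

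Finally, $(3)\Rightarrow(1)$ is the step I anticipate to be the main obstacle, since it requires \emph{producing} the bivariant class rather than merely extracting consequences of one. The idea is that the decomposition $R\pi_*\Q_X \simeq \Q_Y \oplus \bigoplus_{h\geq 1}\derRi[-h]$ splits off the degree-zero summand $\Q_Y$ canonically. Composing the inverse isomorphism with the projection onto this $\Q_Y$ summand produces a morphism $\theta \in Hom_{D(Y)}(R\pi_*\Q_X,\Q_Y)$, i.e. a bivariant class. What must then be verified is that this $\theta$ is \emph{natural} in the sense of Definition \ref{naturality}: that $\theta\circ\pi^*$ is the identity, which should follow from the fact that the inclusion $\Q_Y\to R\pi_*\Q_X$ splitting off this summand is precisely $\iota_0$ (the image of the canonical $\Q_Y\to\pi_*\Q_X$ from Notations \ref{injres}(1)), so the composite with its projection is the identity on $\Q_Y$; and that $\theta$ is compatible with restriction to $U$, which should follow because over the open set $U$ the map $\pi$ is an isomorphism and all the higher direct image summands $\derRi$ are skyscrapers supported at $\infty$, hence vanish on $U$, so the restriction $\theta|_U$ reduces to the identity and automatically satisfies $j^* = k^*\circ\theta$. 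Assembling these two compatibilities completes the cycle and hence the proof of the theorem.
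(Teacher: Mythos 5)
Your proposal is correct and takes essentially the same approach as the paper: (1)$\Rightarrow$(2) is definitional, (2)$\Rightarrow$(3) chains Propositions \ref{mainprop} and \ref{Deligne} exactly as you do, and for (3)$\Rightarrow$(1) the paper likewise defines $\theta$ as the projection onto the summand $\Q_Y$, noting $\theta\circ\iota_0=\mathrm{id}$ and deducing compatibility with restriction to $U$ from the fact that the skyscraper summands $\derRi$ are supported at $\infty$ and so vanish on $U$.
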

\begin{proof}
(1) $\Rightarrow$ (2) by Definitions \ref{naturality} and
\ref{Gysin}. (2) $\Rightarrow$ (3) just combining Propositions
\ref{Deligne} and \ref{mainprop}.

(3) $\Rightarrow$ (1): If $\bigoplus _{h\geq 0}\iota_h $ is an
isomorphism in $D(Y)$, the projection on the first summand of
$\Q_Y + \sum_{h\geq 1}  \derRi[-h ]$ represents   a bivariant
class
$$\theta \in \HomG$$
such that
$$\theta  \circ \iota_0= id\in Hom_{D(Y)}(\Q_Y, \Q_Y),$$
and the first condition of Definition \ref{naturality} is satisfied.

As for (2) of Definition \ref{naturality}, since the sheaves $\derRi$ are supported on $\infty$, we have
$$\Q_Y\mid_U =((\theta  \circ \iota_0)\Q_Y)\mid_U=\theta(\Q_X)\mid_U=\Q_X\mid_U
$$
in $D(U)$, and we are done.
\end{proof}

\vskip5mm

\section{Examples}

Examples of Blow-up admitting a natural Gysin morphism are the
following:

\medskip
$\bullet$ any surface $Y$ with a node (cfr. \cite{DeCM3}, p. 127,
Remark 3.3.3, and \cite{McCrory}, p. 159, Example 2);

\medskip
$\bullet$ the  cone $Y\subseteq \mathbb P^{N}$ over a smooth
projective variety $M\subseteq \mathbb P^{N-1}$ of dimension $m$
such that ${H^{\bullet}(M)}\cong {H^{\bullet}(\mathbb P^m)}$.

\bigskip
This follows from the Proposition \ref{nfake} below, which gives
further characterizations of the existence of a natural Gysin
morphism. As for the equivalence of properties $(2)$-$(5)$  in
Proposition \ref{nfake}, we think they are certainly well-known.
However we briefly give the proof for lack of a suitable reference
(cfr. \cite{DeCM3}, p. 127, Remark 3.3.3). In the sequel we will
denote by $IH(Y)$ the intersection cohomology of $Y$ (see e.g.
\cite{Dimca2}, p. 154-159), by $IC^{\bullet}_Y$  the
\textit{intesection cohomology complex} of $Y$ (cfr.
\cite{Dimca2}, p.159)  and by $H_k^{BM}(U)$ the Borel-Moore
homology of $U$ . Furthermore, following \cite{Massey} we say that
a variety $Y$ of dimension $n$ is a $\bQ$-\textit{intersection
cohomology manifold} when $IC^{\bullet}_Y\simeq \bQ_Y[n]$, in
$D(Y)$. We refer to \cite{FultonYT}, Appendix B, for some
properties of Borel-Moore homology which we need in the proof. All
cohomology and homology groups are with $\mathbb Q$-coefficients.

\bigskip
\begin{proposition}\label{nfake}
Let $Y\subseteq \mathbb P^N$ be a projective irreducible variety
of complex dimension $m+1$, with a unique singular point $\infty
\in Y$, which is an ordinary singularity. Let $\pi:X\to Y$ be the
Blow-up at $\infty$, with smooth and connected exceptional divisor
$G$. The following properties are equivalent.

\smallskip
(1) There exists a natural morphism of graded groups $\theta:
H^{\bullet}(X)\to H^{\bullet}(Y)$.

\smallskip
(2) The duality morphism $H^{\bullet}(Y)\stackrel{\cdot\,\cap
[Y]}\longrightarrow H_{2(m+1)-\bullet}(Y)$ is an isomorphism (i.e.
$Y$ satisfies Poincar\'e Duality).

\smallskip
(3) ${H^{\bullet}(G)}\cong {H^{\bullet}(\mathbb P^m)}$.

\smallskip
(4) The natural map map ${H^{\bullet}(Y)}\to{IH^{\bullet}(Y)}$is
an isomorphism.

\smallskip
(5) $Y$ is $\bQ$-intersection cohomology manifold.
\end{proposition}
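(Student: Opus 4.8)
The plan is to use condition (3) together with the geometry of the link of $\infty$ as the hub through which all the other conditions pass. Since $\infty$ is an ordinary singularity, a punctured neighbourhood of $\infty$ in $Y$ retracts onto the link $L$, which is the unit circle bundle $S^1\hookrightarrow L\stackrel{p}{\to} G$ of the line bundle $\mathcal O_X(-G)|_G=\mathcal O_G(1)$; its Euler class is the hyperplane class $\eta\in H^2(G;\mathbb Q)$. First I would write the Gysin sequence of $p$,
$$\cdots\to H^{k-2}(G)\stackrel{\cup\eta}{\to}H^k(G)\to H^k(L)\to H^{k-1}(G)\stackrel{\cup\eta}{\to}H^{k+1}(G)\to\cdots,$$
and invoke Hard Lefschetz on the smooth projective variety $G$ with respect to the ample class $\eta$: the operator $\cup\eta:H^j(G)\to H^{j+2}(G)$ is injective for $j\le m-1$ and surjective for $j\ge m-1$. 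A short computation then shows that for $0<k\le m$ one has $H^k(L)\cong\mathrm{coker}(\cup\eta:H^{k-2}(G)\to H^k(G))$, i.e. the primitive cohomology $P^k(G)$, while Poincaré duality on the closed oriented manifold $L$ handles the range $k>m$. Hence $L$ is a rational homology $(2m+1)$-sphere if and only if $P^k(G)=0$ for all $0<k\le m$, i.e. if and only if $H^\bullet(G)$ is generated by $\eta$ — which is exactly condition (3), $H^\bullet(G)\cong H^\bullet(\mathbb P^m)$.

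Next I would feed this into the standard dictionary for isolated singularities. For an isolated singular point the stalk of $IC^\bullet_Y$ is computed from the truncated cohomology of the link, and $Y$ is a $\mathbb Q$-homology manifold precisely when $H^\bullet(Y,Y\setminus\{\infty\};\mathbb Q)$ is that of a manifold point, i.e. when $L$ is a rational homology sphere. This yields at once $(3)\Leftrightarrow(2)$ (Poincaré duality is equivalent to the $\mathbb Q$-homology-manifold property) and $(3)\Leftrightarrow(5)$ (the same property is equivalent to $IC^\bullet_Y\simeq\mathbb Q_Y[m+1]$, via the stalk formula and the truncation defining $IC$). For $(5)\Leftrightarrow(4)$ I would observe that the natural map $H^\bullet(Y)\to IH^\bullet(Y)$ is the map on hypercohomology induced by the canonical morphism $\mathbb Q_Y[m+1]\to IC^\bullet_Y$, whose cone $C$ is supported at the point $\infty$; for such a $C$ the hypercohomology is just the stalk cohomology, so the map is an isomorphism in every degree if and only if $C=0$, i.e. if and only if (5) holds. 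The one point needing care here is to check that the isomorphism $H^\bullet(Y)\cong IH^\bullet(Y)$ of (4) is compatible with the two duality pairings, so that it genuinely delivers (2); this follows because $\mathbb Q_Y[m+1]\to IC^\bullet_Y$ is compatible with Verdier duality in the relevant range.

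It remains to tie in condition (1). By Theorem \ref{main}, (1) is equivalent to the splitting $R\pi_*\mathbb Q_X\simeq\bigoplus_{h\ge 0}R^h\pi_*\mathbb Q_X[-h]$, where $R^0\pi_*\mathbb Q_X=\mathbb Q_Y$ and each $R^h\pi_*\mathbb Q_X$ ($h\ge1$) is a skyscraper at $\infty$ with stalk $H^h(G)$. I would compare this with the Decomposition Theorem, which here reads $R\pi_*\mathbb Q_X[m+1]\simeq IC^\bullet_Y\oplus\mathcal S$ with $\mathcal S$ a direct sum of shifted skyscrapers at $\infty$. If (5) holds then $IC^\bullet_Y=\mathbb Q_Y[m+1]$ and the right-hand side is $\mathbb Q_Y[m+1]\oplus\mathcal S$; since a bounded complex of skyscrapers at a point is formal, this exhibits precisely the splitting of Theorem \ref{main}, giving (1). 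Conversely, from (1) one gets $R\pi_*\mathbb Q_X[m+1]\simeq\mathbb Q_Y[m+1]\oplus\mathcal S'$ with $\mathcal S'$ skyscrapers; comparing with the Decomposition-Theorem splitting and using the Krull--Schmidt property of $D^b_c(Y)$ together with the indecomposability of the simple perverse sheaf $IC^\bullet_Y$, the only indecomposable summand not supported at $\infty$ forces $IC^\bullet_Y$ to be a direct summand of $\mathbb Q_Y[m+1]$; as the latter is concentrated in a single cohomological degree, so is $IC^\bullet_Y$, whence $IC^\bullet_Y\simeq\mathbb Q_Y[m+1]$, which is (5).

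The main obstacle is exactly this last equivalence $(1)\Leftrightarrow(5)$: passing from the existence of an abstract splitting of $R\pi_*\mathbb Q_X$ to the sheaf-theoretic statement $IC^\bullet_Y\simeq\mathbb Q_Y[m+1]$. The Krull--Schmidt/indecomposability argument is what makes the converse work. One can also stay closer to the self-contained approach of \S 3 by producing the lifts $\iota_h$ of Proposition \ref{Deligne} directly: under (3) every class of $H^\bullet(G)$ is a power of $\eta=c_1(\mathcal O_X(-G))|_G$, hence the restriction of a global class on $X$, which supplies the required lifts. I would present the Decomposition-Theorem route as the streamlined argument and record the direct one as an alternative.
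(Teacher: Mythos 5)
Your proposal is correct, and for the core equivalence $(2)\Leftrightarrow(3)$ it coincides with the paper's argument: both identify the link of $\infty$ with an $S^1$-bundle over $G$ and run the Thom--Gysin sequence (the paper obtains the bundle by restricting the Hopf fibration $S^{2N-1}\to\mathbb P^{N-1}$ rather than via $\mathcal O_G(\pm1)$, and never invokes Hard Lefschetz explicitly --- once the Betti numbers of $G$ are those of $\mathbb P^m$, ampleness of $\eta$ alone makes $\cup\,\eta$ an isomorphism in the needed range --- but your computation $H^k(L)\cong P^k(G)$ for $0<k\le m$ is a fine, if slightly heavier, way to close the loop). Everywhere else you genuinely diverge. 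The paper proves $(2)\Rightarrow(1)$ by an explicit construction: it splits $H^h(X)=\pi^*H^h(Y)\oplus I^h$ with $I^h=\ker j^*$ and defines $\theta$ as the projection; it proves $(1)\Rightarrow(2)$ by a dimension count ($\dim H^h(G)\le\dim I^h\le \dim H^{h-2}(G)$ for $h>0$, using Lemma \ref{relative} and duality on the smooth $X$), which forces $H^{\bullet}(G)\cong H^{\bullet}(\mathbb P^m)$; it handles $(2)\Leftrightarrow(4)$ by direct Borel--Moore homology computations, and simply cites McCrory and Massey for $(2)\Leftrightarrow(5)$. You instead route everything through the sheaf-theoretic side: $(4)\Leftrightarrow(5)$ via the cone of $\mathbb Q_Y[m+1]\to IC^{\bullet}_Y$ being supported at $\infty$ (a clean argument, arguably tidier than the paper's computation), $(3)\Leftrightarrow(5)$ via the stalk formula for $IC$ at an isolated singularity, and $(1)\Leftrightarrow(5)$ via Theorem \ref{main} combined with the Decomposition Theorem and Krull--Schmidt in the constructible derived category --- exactly the alternative the authors flag in the introduction (``formula (\ref{eqintro}) could be proved as a consequence of the Decomposition Theorem'') and deliberately avoid in favour of the direct approach of \S 3. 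Your Krull--Schmidt step is sound: $IC^{\bullet}_Y$ is indecomposable, and the only summand of $\mathbb Q_Y[m+1]\oplus(\text{shifted skyscrapers})$ not supported at a point is $\mathbb Q_Y[m+1]$, itself indecomposable since $\mathrm{End}_{D(Y)}(\mathbb Q_Y)=H^0(Y;\mathbb Q)=\mathbb Q$. One small point you should make explicit in $(5)\Rightarrow(1)$: an abstract splitting $R\pi_*\mathbb Q_X\simeq\mathbb Q_Y\oplus\bigoplus_h S_h[-h]$ must be converted into the specific maps $\iota_h$ of Proposition \ref{Deligne}; this works because applying $\mathcal H^h$ to the components identifies $S_h\cong R^h\pi_*\mathbb Q_X$ so that each component lifts the identity, while $Hom_{D(Y)}(\mathbb Q_Y,R\pi_*\mathbb Q_X)=H^0(X;\mathbb Q)=\mathbb Q$ lets you rescale the degree-zero component to $\iota_0$. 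As for what each approach buys: yours is shorter and conceptually transparent at the price of the Decomposition Theorem and Krull--Schmidt machinery; the paper's is elementary and self-contained, and its explicit construction additionally yields the uniqueness of $\theta$ and its identification with the Poincar\'e-dual push-forward (Remark \ref{nfinalremark}), which the abstract splitting does not hand you directly.
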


\begin{proof} First we prove that $(2)$ is equivalent to $(3)$.

Since the singular locus of $Y$ is finite, by \cite{McCrory} we
know that $Y$ satisfies Poincar\'e Duality if and only if $Y$ is a
homology manifold, i.e. if and only if
$H^h(Y,Y\backslash\{y\})\cong H^h(\mathbb R^{2(m+1)},\mathbb
R^{2(m+1)} \backslash\{0\})$ for any $y\in Y$. This condition is
certainly verified if $y$ is a regular point of $Y$. Choose a
small closed ball $D\subseteq \mathbb P^N$ around $\infty$ and set
$B:=D\cap Y$. By excision we have
$H^h(Y,Y\backslash\{\infty\})\cong H^h(B,B\backslash\{\infty\})$.
Recall that $B$ is homeomorphic to the cone over the link
$K:=\partial D\cap Y$ of the singularity $\infty\in Y$, with
vertex at $\infty$ (\cite{Dimca1}, p. 23). In particular $B$ is
contractible. Therefore, from the long exact sequence of
cohomology of the couple $(B,B\backslash\{\infty\})$, it follows
that $Y$ is a homology manifold if and only if
$B\backslash\{\infty\}$ has the same $\mathbb Q$-homology type as
a sphere $S^{2m+1}$. This in turn is equivalent to say that the
link $K$  has the same $\mathbb Q$-homology type as a sphere
$S^{2m+1}$, because $K$ is a deformation retract of
$B\backslash\{\infty\}$. On the other hand, via deformation to the
normal cone, we may identify $K$ with the link of the vertex of
the projective cone over the exceptional divisor $G \subseteq
\mathbb P^{N-1}$. Restricting the Hopf bundle $S^{2N-1}\to \mathbb
P^{N-1}$ to $G$, we obtain an $S^1$-bundle $K\to G$ inducing the
Thom-Gysin sequence (\cite{Spanier}, p.260)
$$
\dots\to H^h(G)\to H^h(K)\to H^{h-1}(G)\to H^{h+1}(G)\to
H^{h+1}(K)\to\dots
$$
And this sequence implies that $K$ has the same $\mathbb
Q$-homology type as a sphere $S^{2m+1}$ if and only if
${H^{\bullet}(G)}\cong {H^{\bullet}(\mathbb P^m)}$.

\medskip
Now we are going to prove that $(2)$ is equivalent to $(4)$.

First assume that property $(2)$ holds true. Since the singular
locus of $Y$ is finite, by (\cite{Dimca2}, p. 157) we already know
that $H^h(Y)=IH^h(Y)$ if $h>m+1$. Moreover we know that
$IH^{m+1}=\Im(H^{m+1}(Y)\to H^{m+1}(U))$, where
$U=Y\backslash\{\infty\}$. On the other hand
$H^{h}(U)=H_{2(m+1)-h}^{BM}(U)$ (\cite{FultonYT}, p. 217, (26)),
and from the natural exact sequence (\cite{FultonYT}, p. 219,
Lemma 3)
$$
\dots\to H_i(\{\infty\})\to H_i(Y)\to H_{i}^{BM}(U)\to \dots
$$
we see that $H_{2(m+1)-h}^{BM}(U)=H_{2(m+1)-h}(Y)$ for $h <
2(m+1)$. In particular for $h=m+1$ we have
$H^{m+1}(U)=H_{m+1}(Y)$, and therefore $IH^{m+1}=\Im(H^{m+1}(Y)\to
H_{m+1}(Y))=H_{m+1}(Y)=H^{m+1}(Y)$. Finally, when $h<m+1$ then we
have $IH^h(Y)=H^h(U)=H_{2(m+1)-h}(Y)=H^h(Y)$. Conversely assume
that property $(4)$ holds true. Since intersection cohomology
verifies Poincar\'e Duality (\cite{Dimca2}, p. 158), we have:
$$
H^h(Y)=IH^h(Y)=(IH^{2(m+1)-h}(Y))^{\vee}=(H^{2(m+1)-h}(Y))^{\vee}=H_{2(m+1)-h}(Y).
$$

\medskip
Next we prove that property $(2)$ implies $(1)$.

To this purpose, consider the following commutative natural
diagram:
$$
\begin{array}{ccccccc}
H^{h-1}(G)&\stackrel {}{\to}&H^{h}(X,G)&\stackrel{}{\to}
&H^{h}(X)&\stackrel {}{\to}&H^{h}(G)\\
\stackrel {}{}\uparrow & &\stackrel {}{}\Vert & & \stackrel {\pi^*}{}\uparrow& &\stackrel {}{}\uparrow \\
H^{h-1}(\infty)&\stackrel
{}{\to}&H^{h}(Y,\infty)&\stackrel{}{\to}&H^{h}(Y)
&\stackrel {}{\to}&H^{h}(\infty).\\
\end{array}
$$
Since ${H^{\bullet}(G)}\cong {H^{\bullet}(\mathbb P^m)}$, it
follows that the restriction map $H^{h-1}(X)\to H^{h-1}(G)$ is
surjective for any $h$. Therefore from previous diagram we deduce
that the pull-back $ \pi^*:H^{h}(Y)\rightarrow H^{h}(X)$ is
injective, and that $H^h(Y)\cong \pi^*H^h(Y)=H^h(X)$ if $h$ is odd
(either $h=0$ or $h\geq 2(m+1)$), and that $\dim H^h(X)-\dim
H^h(Y)= 1$ if $h$ is even with $2\leq h\leq 2m$. Next put:
$$
I^h:=\ker(H^h(X)\stackrel{j^*}\to H^h(U))=\Im(H^h(X,U)\to H^h(X)).
$$
Since $X$ is smooth we have $H^h(X,U)\cong H_{2(m+1)-h}(G)$
(\cite{Spanier}, p.351, Lemma 14), and the map $H^h(X,U)\to
H^h(X)$ identifies with the push-forward:
\begin{equation}\label{pd}
H_{2(m+1)-h}(G)\to H_{2(m+1)-h}(X)\cong H^{h}(X).
\end{equation}
Therefore $\dim I^h=1$ if $h$ is even with $2\leq h\leq 2(m+1)$,
and $\dim I^h=0$ otherwise. Now consider the following natural
diagram, where all maps are restrictions:
$$
\begin{array}{ccccc}
H^h(X)&\stackrel{\pi^*}{\hookleftarrow}  & H^h(Y)  \\
\stackrel{j^*}{}\downarrow\quad&\swarrow\stackrel{k^*}{}\\
H^h(U).\\
\end{array}
$$
By functoriality this diagram commutes:
$$
k^*=j^*\circ \pi^*.
$$
As before, when $h$ is even with $2\leq h\leq 2m$, the restriction
map $k^*:H^h(Y)\to H^h(U)$ identifies with the duality morphism
$H^h(Y)\to H_{2(m+1)-h}(Y)$, which is bijective by our assumption.
Since $I^h\cap \pi^*H^k(Y)=\pi^*(\ker k^*)$, it follows that
$$
I^h\cap \pi^*H^h(Y)=0.
$$
Therefore, by dimensional reasons, we have
$$
H^h(X)= \begin{cases}\pi^*H^h(Y)\oplus I^h \quad{\text{if $h$ is
even with $2\leq h\leq 2m$}}\\
\pi^*H^h(Y) \quad{\text{otherwise}}.
\end{cases}
$$
We are in position to define the natural morphism
$\theta:H^h(X)\to H^h(Y)$. In fact, when $h$ is even with $2\leq
h\leq 2m$, for any $x=\pi^*(y)+i\in H^h(X)=\pi^*H^h(Y)\oplus I^h$
put:
$$
\theta(x):=y,
$$
and put $$\theta:=(\pi^*)^{-1}$$ otherwise. It is evident that
$\theta\circ \pi^*={\rm{id}}_{H^{\bullet}(Y)}$, and that $k^*\circ
\theta=j^*$ when $\theta=(\pi^*)^{-1}$, because $j^*\circ
\pi^*=k^*$. When $h$ is even with $2\leq h\leq 2m$, taking into
account that $I^h=\ker j^*$, we have: $
(k^*\circ\theta)(x)=k^*(y)=(j^*\circ
\pi^*)(y)=j^*(\pi^*(y)+i)=j^*(x)$.

\medskip
Finally we are going to prove that property $(1)$ implies $(2)$.

Since $\theta\circ \pi^*={\text{id}}_{H^k(Y)}$ we get the
decomposition
\begin{equation}\label{dec}
H^h(P)=\pi^*H^h(Y)\oplus \,\ker\theta.
\end{equation}
Hence $\theta$ is compatible with restrictions on $U$ if and only
if
$$
\ker\theta\subseteq I^k
$$
(in fact for any $x\in H^h(X)$, $x=\pi^*y+c$, with
$c\in\ker\theta$, we have $(k^*\circ\theta) (x)=k^* (y)=(j^* \circ
\pi^*) (y)$; therefore $(k^*\circ\theta)(x)=j^* (x)$ if and only
if $j^* (x)=j^* (\pi^* y+c)=(j^*\circ \pi^*)(y)+j^* (c)=(j^*\circ
\pi^*) (y)$, hence if and only if $j^* (c)=0$). On the other hand,
combining Lemma \ref{relative} with (\ref{dec}), we get for $h>0$
$$
\dim \ker\theta=\dim H^h(G),
$$
and therefore (cfr. (\ref{pd})), for any $h>0$,
$$
\dim H^h(G)\leq \dim I^h\leq \dim H_{2(m+1)-h}(G)= \dim
H^{h-2}(G).
$$

Finally, for the equivalence between $(2)$ and $(5)$ we refer the reader to \cite{McCrory} and \cite{Massey}.
\end{proof}

\begin{remark}
\label{nfinalremark} From previous Proposition we see that,  if
existing, the natural morphism $\theta:H^{\bullet}(X)\to
H^{\bullet}(Y)$ is unique and identifies with the push-forward via
Poincar\'e Duality:
$$
H^{\bullet}(X)\cong H_{2(m+1)-\bullet}(X)\to
H_{2(m+1)-\bullet}(Y)\cong H^{\bullet}(Y).
$$
In fact $\theta={(k^*)}^{-1}\circ j^*$, and $k^*:H^h(Y)\to
H^h(U)$, for $h<2(m+1)$, is nothing but the duality morphism
because $H^h(U)\cong H_{2(m+1)-h}(Y)$.
\end{remark}

\end{document}